\documentclass[12pt, reqno]{amsart}
\usepackage[T1]{fontenc}
\usepackage{amsfonts}
\usepackage{amsmath}
\usepackage{amsthm}
\usepackage{amssymb}
\usepackage{geometry}
\usepackage{graphicx}
\usepackage{xcolor}
\usepackage{mathtools}
\usepackage[colorlinks=true, linkcolor=blue]{hyperref}
\usepackage{cleveref}
\usepackage[english]{babel}
\usepackage{lineno}
\usepackage{float}
\usepackage{setspace}

\textwidth16cm \textheight21cm \oddsidemargin-0.1cm
\evensidemargin-0.1cm

\newtheorem{theorem}{Theorem}[section]

\newtheorem{lemma}[theorem]{Lemma}
\newtheorem{conjecture}[theorem]{Conjecture}
\newtheorem{proposition}[theorem]{Proposition}

\usepackage{tikz}
\usetikzlibrary{positioning}
\usetikzlibrary{decorations,arrows}
\usetikzlibrary{decorations.markings}
\numberwithin{equation}{section}

\usepackage{rustic}
\usepackage[T1]{fontenc}

\tolerance=1
\emergencystretch=\maxdimen
\hyphenpenalty=10000
\hbadness=10000

\title{Strong hub cover pebbling number}
\author{Runze Wang}
\address[]{Department of Mathematical Sciences, University of Memphis, Memphis, TN 38152, USA}
\email{rwang6@memphis.edu; runze.w@hotmail.com}
\thanks{}
\date{\today}
\subjclass[2020]{05C57}

\begin{document}

\sloppy

\begin{abstract}
    In a graph $G$, we define a set of vertices to be a \emph{strong hub set} if for any two vertices in $G$, we can find a path between them whose internal vertices are all in this set. We define the \emph{strong hub cover pebbling number} of $G$, denoted by $h_s^*(G)$, to be the smallest $t$ such that for any initial configuration with $t$ pebbles on $G$, we can make some pebbling moves (a pebbling move consists of removing two pebbles from a vertex $v$ and adding one pebble to another vertex adjacent to $v$) so that there is a strong hub set with every vertex in it having a pebble. We determine the strong hub cover pebbling numbers of paths, stars, and books.
\end{abstract}
\keywords{Pebbling number; Hub set; Path; Star; Book}

\maketitle
\doublespacing

\section{Introduction}
In this paper, we only study finite simple connected graphs.

The concept of graph pebbling was first proposed by Lagarias and Saks for giving an alternative proof of a theorem in number theory \cite{Hu}. Then, in 1989, Chung formally introduced this concept and defined the \emph{pebbling number} of a graph in \cite{Ch}. In a graph $G=(V,\, E)$, we distribute $t$ pebbles to the vertices. For two vertices $u$ and $v$ adjacent to each other, assuming that there are at least two pebbles on $u$, we can make a \emph{pebbling move} by removing two pebbles from $u$ and adding one pebble to $v$. The \emph{pebbling number} of $G$, denoted by $\pi(G)$, is the smallest $t$ such that for any initial configuration with $t$ pebbles on $G$ and any chosen \emph{root vertex} $v\in V$, we can make some pebbling moves so that there is a pebble on $v$. Graph pebbling has been extensively studied since it was introduced. For some recent developments, see \cite{ADHS,AH,Fi,KSW}.

There are also some variants of graph pebbling. Isaak and Prudente defined a two-player pebbling game in \cite{IP}. (Also, see \cite{IPPFM,Pr}.) Crull et al. introduced in \cite{CCFHPST} the cover pebbling number of $G$, which is the smallest number $t$ such that for any initial configuration with $t$ pebbles on $G$, we can make some pebbling moves so that every vertex in $G$ has a pebble on it. In \cite{HM}, Hurlbert and Munyan determined the cover pebbling numbers of the hypercubes. Lourdusamy and Tharani introduced the covering cover pebbling number in \cite{LT}. This concept has been studied on different graphs in \cite{LM,LP,Mc}.

Recently, Lourdusamy et al. introduced the \emph{hub cover pubbling number} in \cite{LBP}. The concept of hub sets was introduced by Walsh in \cite{Wa}. In graph $G$, a nonempty vertex set $U\subseteq V$ is called a \emph{hub set} if for any two vertices $v_1,\, v_2\notin U$, we can find a path between $v_1$ and $v_2$ whose internal vertices are all in $U$. (If $v_1$ and $v_2$ are adjacent, then there is a path between them which does not have any internal vertices, but the empty set is also a subset of $U$, so the condition also holds for $v_1$ and $v_2$.) A hub set gives a model, where we convert some buildings to rapid-transit stations (choose some vertices to be in the hub set), so that people can walk from a building to a station, take the rapid-transit system to another station, and walk to the destination building. The \emph{hub cover pebbling number} of $G$, denoted by $h^*(G)$, is the smallest number $t$ such that for any initial configuration with $t$ pebbles on $G$, we can make some pebbling moves so that there is a hub set $U$ with every vertex in $U$ having a pebble. In \cite{LBP}, Lourdusamy et al. determined the hub cover pebbling numbers on some wheel-related graphs.

In the hub set model, one disadvantage is that it is only guaranteed that people can efficiently transit between two buildings (two vertices not in the hub set), but not two stations (two vertices in the hub set) or one station and one building (one vertex in the hub set and one vertex not in the hub set). To solve this problem, we define a nonempty vertex set $U\subseteq V$ to be a \emph{strong hub set} if for any two vertices in the graph, we can find a path between them whose internal vertices are all in $U$. This concept gives us a model where people can conveniently transit between any two places.

Similarly, we define the \emph{strong hub cover pebbling number} of $G$, denoted by $h_s^*(G)$. Note that, a strong hub set is always a hub set, so we have $h_s^*(G)\ge h^*(G)$ for any graph $G$. However, a hub set is not always a strong hub set. For example, as shown in Figure \ref{counter}, in $P_4$, the yellow vertex set $\{v_1,\ v_4\}$ is a hub set but not a strong hub set; in $C_6$, the yellow vertex set $\{v_2,\ v_4,\ v_6\}$ is a hub set but not a strong hub set.

\begin{figure}[H]
    \tikzset{every picture/.style={line width=0.75pt}} 

\begin{tikzpicture}[x=0.75pt,y=0.75pt,yscale=-1,xscale=1]

\draw    (125,603.2) -- (195,603.2) ;
\draw    (195,603.2) -- (265,603.2) ;
\draw    (265,603.2) -- (335,603.2) ;
\draw  [fill={rgb, 255:red, 0; green, 0; blue, 0 }  ,fill opacity=1 ] (259.5,603.2) .. controls (259.5,600.16) and (261.96,597.7) .. (265,597.7) .. controls (268.04,597.7) and (270.5,600.16) .. (270.5,603.2) .. controls (270.5,606.24) and (268.04,608.7) .. (265,608.7) .. controls (261.96,608.7) and (259.5,606.24) .. (259.5,603.2) -- cycle ;
\draw  [fill={rgb, 255:red, 0; green, 0; blue, 0 }  ,fill opacity=1 ] (189.5,603.2) .. controls (189.5,600.16) and (191.96,597.7) .. (195,597.7) .. controls (198.04,597.7) and (200.5,600.16) .. (200.5,603.2) .. controls (200.5,606.24) and (198.04,608.7) .. (195,608.7) .. controls (191.96,608.7) and (189.5,606.24) .. (189.5,603.2) -- cycle ;
\draw  [fill={rgb, 255:red, 245; green, 166; blue, 35 }  ,fill opacity=1 ] (119.5,603.2) .. controls (119.5,600.16) and (121.96,597.7) .. (125,597.7) .. controls (128.04,597.7) and (130.5,600.16) .. (130.5,603.2) .. controls (130.5,606.24) and (128.04,608.7) .. (125,608.7) .. controls (121.96,608.7) and (119.5,606.24) .. (119.5,603.2) -- cycle ;
\draw   (571.2,603.2) -- (534.4,666.94) -- (460.8,666.94) -- (424,603.2) -- (460.8,539.46) -- (534.4,539.46) -- cycle ;
\draw  [fill={rgb, 255:red, 0; green, 0; blue, 0 }  ,fill opacity=1 ] (455.3,539.46) .. controls (455.3,536.42) and (457.76,533.96) .. (460.8,533.96) .. controls (463.84,533.96) and (466.3,536.42) .. (466.3,539.46) .. controls (466.3,542.5) and (463.84,544.96) .. (460.8,544.96) .. controls (457.76,544.96) and (455.3,542.5) .. (455.3,539.46) -- cycle ;
\draw  [fill={rgb, 255:red, 0; green, 0; blue, 0 }  ,fill opacity=1 ] (455.3,666.94) .. controls (455.3,663.9) and (457.76,661.44) .. (460.8,661.44) .. controls (463.84,661.44) and (466.3,663.9) .. (466.3,666.94) .. controls (466.3,669.98) and (463.84,672.44) .. (460.8,672.44) .. controls (457.76,672.44) and (455.3,669.98) .. (455.3,666.94) -- cycle ;
\draw  [fill={rgb, 255:red, 0; green, 0; blue, 0 }  ,fill opacity=1 ] (565.7,603.2) .. controls (565.7,600.16) and (568.16,597.7) .. (571.2,597.7) .. controls (574.24,597.7) and (576.7,600.16) .. (576.7,603.2) .. controls (576.7,606.24) and (574.24,608.7) .. (571.2,608.7) .. controls (568.16,608.7) and (565.7,606.24) .. (565.7,603.2) -- cycle ;
\draw  [fill={rgb, 255:red, 245; green, 166; blue, 35 }  ,fill opacity=1 ] (418.5,603.2) .. controls (418.5,600.16) and (420.96,597.7) .. (424,597.7) .. controls (427.04,597.7) and (429.5,600.16) .. (429.5,603.2) .. controls (429.5,606.24) and (427.04,608.7) .. (424,608.7) .. controls (420.96,608.7) and (418.5,606.24) .. (418.5,603.2) -- cycle ;
\draw  [fill={rgb, 255:red, 245; green, 166; blue, 35 }  ,fill opacity=1 ] (528.9,666.94) .. controls (528.9,663.9) and (531.36,661.44) .. (534.4,661.44) .. controls (537.44,661.44) and (539.9,663.9) .. (539.9,666.94) .. controls (539.9,669.98) and (537.44,672.44) .. (534.4,672.44) .. controls (531.36,672.44) and (528.9,669.98) .. (528.9,666.94) -- cycle ;
\draw  [fill={rgb, 255:red, 245; green, 166; blue, 35 }  ,fill opacity=1 ] (528.9,539.46) .. controls (528.9,536.42) and (531.36,533.96) .. (534.4,533.96) .. controls (537.44,533.96) and (539.9,536.42) .. (539.9,539.46) .. controls (539.9,542.5) and (537.44,544.96) .. (534.4,544.96) .. controls (531.36,544.96) and (528.9,542.5) .. (528.9,539.46) -- cycle ;
\draw  [fill={rgb, 255:red, 245; green, 166; blue, 35 }  ,fill opacity=1 ] (329.5,603.2) .. controls (329.5,600.16) and (331.96,597.7) .. (335,597.7) .. controls (338.04,597.7) and (340.5,600.16) .. (340.5,603.2) .. controls (340.5,606.24) and (338.04,608.7) .. (335,608.7) .. controls (331.96,608.7) and (329.5,606.24) .. (329.5,603.2) -- cycle ;

\draw (116.6,614) node [anchor=north west][inner sep=0.75pt]   [align=left] {$\displaystyle v_{1}$};
\draw (185.6,614) node [anchor=north west][inner sep=0.75pt]   [align=left] {$\displaystyle v_{2}$};
\draw (256.6,614) node [anchor=north west][inner sep=0.75pt]   [align=left] {$\displaystyle v_{3}$};
\draw (325.6,614) node [anchor=north west][inner sep=0.75pt]   [align=left] {$\displaystyle v_{4}$};
\draw (451.6,520) node [anchor=north west][inner sep=0.75pt]   [align=left] {$\displaystyle v_{1}$};
\draw (525.6,520) node [anchor=north west][inner sep=0.75pt]   [align=left] {$\displaystyle v_{6}$};
\draw (451.6,676) node [anchor=north west][inner sep=0.75pt]   [align=left] {$\displaystyle v_{3}$};
\draw (525.6,676) node [anchor=north west][inner sep=0.75pt]   [align=left] {$\displaystyle v_{4}$};
\draw (398.6,597) node [anchor=north west][inner sep=0.75pt]   [align=left] {$\displaystyle v_{2}$};
\draw (579.6,597) node [anchor=north west][inner sep=0.75pt]   [align=left] {$\displaystyle v_{5}$};

\end{tikzpicture}
\caption{Two examples of hub sets which are not strong hub sets.}
\label{counter}
\end{figure}
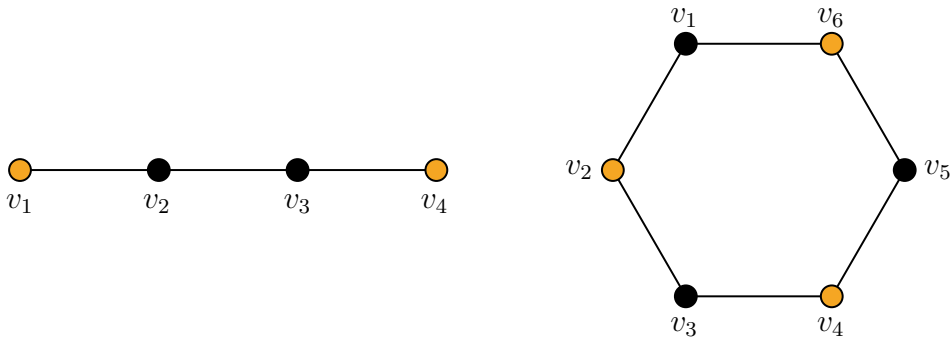

Also, we would like to mention that, a strong hub set is always a dominating set, but a dominating set is not always a strong hub set.

In this short paper, we determine the strong hub cover pebbling numbers of paths, stars, and books. Also, we suggest an open problem about cycles.

\section{Paths}
We denote the path on $n$ vertices by $P_n$ and denote the $n$ vertices from one end to the other by $v_1,\ v_2,\ ...\ v_n$. Trivially, we have $h_s^*(P_1)=h_s^*(P_2)=0$.
\begin{theorem}
    For any $n\ge 3$, we have
    \begin{align*}
        h_s^*(P_n)=2^{n-1}-1.
    \end{align*}
\end{theorem}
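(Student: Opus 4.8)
My plan is first to pin down the strong hub sets of $P_n$ and then reduce the covering problem to an ordinary cover pebbling problem on a shorter path. In $P_n$ the only path between $v_i$ and $v_j$ is the subpath joining them, so its internal vertices are forced to lie in any strong hub set. Applying this to the pair $(v_1,v_n)$ shows that every strong hub set must contain all interior vertices $\{v_2,\dots,v_{n-1}\}$, and one checks directly that this interior set is itself a strong hub set. Hence the strong hub sets of $P_n$ are exactly the sets $U$ with $\{v_2,\dots,v_{n-1}\}\subseteq U$, and the cheapest one to cover is the interior $T=\{v_2,\dots,v_{n-1}\}$. Consequently $h_s^*(P_n)$ is the least $t$ such that every distribution of $t$ pebbles can be maneuvered to place a pebble on each of $v_2,\dots,v_{n-1}$.

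The key device is a reduction that folds the two (zero-demand) endpoints inward. Given a configuration $D$, since $v_1$ is a leaf not in $T$, its pebbles are useful only by being shipped toward $v_2$, and no strategy can deliver more than $\lfloor D(v_1)/2\rfloor$ of them to $v_2$; symmetrically for $v_n$ and $v_{n-1}$. I therefore define a configuration $D'$ on the subpath $P_{n-2}=v_2\cdots v_{n-1}$ by $D'(v_2)=D(v_2)+\lfloor D(v_1)/2\rfloor$, $D'(v_{n-1})=D(v_{n-1})+\lfloor D(v_n)/2\rfloor$, and $D'(v_j)=D(v_j)$ otherwise. The claim is that $D$ covers $T$ in $P_n$ if and only if $D'$ covers every vertex of $P_{n-2}$: the backward direction is immediate because $D'$ is reachable from $D$, and the forward direction uses the standard fact that on a path an optimal covering strategy never ships pebbles toward a leaf, so at most $\lfloor D(v_1)/2\rfloor$ pebbles ever cross the edge $v_1v_2$. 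This converts the problem into the ordinary cover pebbling of a path, for which $\gamma(P_m)=2^m-1$ with the extremal non-covering configurations stacked at an endpoint; I will cite or establish this standard fact.

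For the upper bound I take any $D$ with $|D|=2^{n-1}-1$ and bound $|D'|$ from below. Since only the endpoint contributions are lost, $|D'|=|D|-\lceil D(v_1)/2\rceil-\lceil D(v_n)/2\rceil$. Using $\lceil a/2\rceil\le (a+1)/2$ together with $D(v_1)+D(v_n)\le 2^{n-1}-1$ gives $\lceil D(v_1)/2\rceil+\lceil D(v_n)/2\rceil\le 2^{n-2}$, where the crucial point is that the odd total forces the two ceilings not to both round up. Hence $|D'|\ge 2^{n-2}-1=\gamma(P_{n-2})$, so $D'$ covers $P_{n-2}$ and therefore $D$ covers $T$, yielding $h_s^*(P_n)\le 2^{n-1}-1$.

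For the lower bound I exhibit the non-covering configuration $D^*$ with $D^*(v_1)=2^{n-1}-3$, $D^*(v_n)=1$, and all other vertices empty, of total size $2^{n-1}-2$. The lone pebble at $v_n$ can never move, while covering all of $T$ from the $v_1$-stack costs $\sum_{j=2}^{n-1}2^{j-1}=2^{n-1}-2$, one more than is available; equivalently $D'$ places only $2^{n-2}-2<\gamma(P_{n-2})$ pebbles at the endpoint $v_2$ of $P_{n-2}$, so it fails. Thus $h_s^*(P_n)\ge 2^{n-1}-1$, completing the proof. The point I expect to require the most care is exactly this gap of one: because the endpoints carry zero demand, the Cover Pebbling Theorem does not apply directly and naive stacking at an endpoint only forces $2^{n-1}-2$; it is the extra stranded pebble at the opposite end, captured cleanly by the parity of $2^{n-1}-1$ in the reduction, that accounts for the final value.
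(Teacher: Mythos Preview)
Your argument is correct and takes a genuinely different route from the paper's. Both proofs begin with the same observation that the unique minimal strong hub set of $P_n$ is the interior $T=\{v_2,\dots,v_{n-1}\}$, and both use the same lower-bound configuration $D^*(v_1)=2^{n-1}-3$, $D^*(v_n)=1$. The upper bounds, however, are structurally different. The paper does not reduce to a known cover pebbling result; instead it proves by induction on $n$ the strengthened statement that $2^{n-1}-1$ pebbles suffice to cover $T$ \emph{and} leave a pebble on one of the two endpoints, invoking only the ordinary pebbling number $\pi(P_m)=2^{m-1}$ inside the inductive step. Your approach folds the two demand-free leaves inward to obtain a configuration $D'$ on $P_{n-2}$, and then appeals to the cover pebbling number $\gamma(P_{m})=2^{m}-1$; the parity of $2^{n-1}-1$ is exactly what makes the ceiling bound $\lceil D(v_1)/2\rceil+\lceil D(v_n)/2\rceil\le 2^{n-2}$ go through. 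Your reduction is slicker and explains the value $2^{n-1}-1$ conceptually as $2\gamma(P_{n-2})+1$, at the cost of importing the Cover Pebbling (Stacking) Theorem for paths and the tree no-backtracking fact for the forward direction of the equivalence; the paper's induction is longer but entirely self-contained. Two small points worth tightening in a final write-up: when $n=3$ the vertices $v_2$ and $v_{n-1}$ coincide, so $D'(v_2)$ should receive both floor contributions; and your remark that ``the odd total forces the two ceilings not to both round up'' is about the odd total $|D|$, not about $D(v_1)+D(v_n)$ itself, so the cleanest justification is the integrality step $\lceil a/2\rceil+\lceil b/2\rceil\le (a+b+2)/2\le 2^{n-2}+\tfrac12$.
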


We observe that for $n\ge 3$, a vertex set in $P_n$ is a strong hub set if and only if it contains the $n-2$ vertices in the middle, so we have to let each of $v_2,\ v_3,\ ...\ v_{n-1}$ have a pebble after some pebbling moves. If we only have $2^{n-1}-2$ pebbles, then we can put $2^{n-1}-3$ of them on $v_1$ and put one of them on $v_n$, and it is easy to see that we cannot get a pebble on $v_{n-1}$ after any pebbling moves. So $h_s^*(P_n)\ge 2^{n-1}-1$.

To show that $2^{n-1}-1$ pebbles are enough, we will prove a slightly stronger result by induction.

\begin{proposition}
    If we have $2^{n-1}-1$ pebbles, then after some pebbling moves, we can get (at least) one pebble on each of $v_2,\ v_3,\ ...,\ v_{n-1}$, and also get (at least) one pebble on $v_1$ or $v_n$.
\end{proposition}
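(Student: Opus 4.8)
The plan is to prove the Proposition by induction on $n$, so that the flexible endpoint condition is exactly what lets the statement feed back into itself. The base case $n=2$ is immediate: the interior $v_2,\dots,v_{n-1}$ is empty, and with $2^{2-1}-1=1$ pebble somewhere on $P_2$ one of the two endpoints already carries a pebble; for $n=3$ one checks directly that any placement of $3$ pebbles on $P_3$ lets us reach $v_2$ together with one of $v_1,v_3$. Two cheap tools will be used throughout: the reflection symmetry $v_i\mapsto v_{n+1-i}$, which preserves the target ``cover $v_2,\dots,v_{n-1}$ and one of $v_1,v_n$,'' and the trivial monotonicity remark that extra pebbles never hurt, so the inductive hypothesis applies to \emph{any} configuration on $P_{n-1}$ carrying at least $2^{n-2}-1$ pebbles.

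For the inductive step I would peel one endpoint by a collapse. Writing $a_i$ for the number of pebbles on $v_i$ and using the symmetry to assume $a_n\ge a_1$, I collapse $v_n$ into $v_{n-1}$ by making $\lfloor a_n/2\rfloor$ moves along the edge $v_nv_{n-1}$; this leaves $a_n\bmod 2$ pebbles on $v_n$ and transfers $\lfloor a_n/2\rfloor$ onto $v_{n-1}$, producing a configuration on the subpath $\{v_1,\dots,v_{n-1}\}\cong P_{n-1}$ with $2^{n-1}-1-\lceil a_n/2\rceil$ pebbles. Since $a_n\le 2^{n-1}-1$ forces $\lceil a_n/2\rceil\le 2^{n-2}$, this count is at least $2^{n-2}-1$, so the inductive hypothesis produces moves covering $v_2,\dots,v_{n-2}$ together with one of $v_1,v_{n-1}$. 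Lifting these moves back to $P_n$, I am left to upgrade this partial cover to the full target $\{v_2,\dots,v_{n-1}\}$ plus an endpoint.

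Two cases arise. In the favorable case the hypothesis covers the neighbor $v_{n-1}$, so $v_2,\dots,v_{n-1}$ are all covered and only an endpoint is missing; this is free when $a_n$ is odd (the parity pebble sits on $v_n$) and otherwise must be drawn from the surplus $2^{n-2}-\lceil a_n/2\rceil$ of unused pebbles. In the unfavorable case the hypothesis covers $v_1$ but not $v_{n-1}$, and I must additionally deposit a pebble on $v_{n-1}$, for which the pebbles pushed onto $v_{n-1}$ by the collapse together with the surplus are the natural resource. I expect this unfavorable bookkeeping to be the main obstacle: one must route the leftover pebbles to the single missing vertex without disturbing vertices already covered, and must separately handle the degenerate regime in which almost all pebbles sit on $v_n$, where the surplus is $0$ and the hypothesis is applied with equality. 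In that regime I would bypass the induction and cover $v_2,\dots,v_{n-1}$ directly from the stack on $v_n$ by the same geometric-series computation that makes the lower-bound configuration tight, retaining the parity pebble on $v_n$ as the endpoint. It is worth flagging why the tempting shortcut fails and the disjunction cannot be sidestepped: reserving one pebble on an occupied endpoint and covering the interior with the remaining $2^{n-1}-2$ pebbles does not work, because the cover pebbling number of the proper sub-target $\{v_2,\dots,v_{n-1}\}$ exceeds $2^{n-1}-2$ (on $P_4$, the six-pebble configuration with five pebbles on $v_1$ and one on $v_4$ cannot cover $\{v_2,v_3\}$). The full budget and the freedom ``$v_1$ or $v_n$'' must therefore be spent together, which is exactly what the inductive collapse accomplishes.
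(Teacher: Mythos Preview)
Your inductive framework is the right one and matches the paper's, but the execution has two genuine gaps that the paper repairs differently from what you sketch.

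First, the collapse step is a detour that throws away $\lceil a_n/2\rceil$ pebbles you will need later, and your symmetry choice $a_n\ge a_1$ makes this loss as large as possible. The paper instead applies the pigeonhole principle directly: one of the two length-$(n-1)$ sub-paths $\{v_1,\dots,v_{n-1}\}$, $\{v_2,\dots,v_n\}$ already carries at least $2^{n-2}$ pebbles, so after reflection one may feed exactly $2^{n-2}-1$ of the pebbles on $\{v_1,\dots,v_{n-1}\}$ into the induction hypothesis with no preliminary moves. This leaves a surplus of at least $2^{n-2}$ unused pebbles on $P_n$, whereas your collapse only guarantees $2^{n-2}-\lceil a_n/2\rceil$, which can be zero.

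Second---and this is the real hole---you never explain how a scattered surplus reaches the one missing vertex without uncovering something already placed; the phrases ``must be drawn from the surplus'' and ``the natural resource'' hide precisely the difficulty you flag. The paper closes this with one external fact you do not invoke: the pebbling number $\pi(P_m)=2^{m-1}$ (Lemma~\ref{lemma}). After casing on whether $v_k$ carries $0$, $1$, or $\ge 2$ pebbles, the surplus sitting on $\{v_1,\dots,v_{k-1}\}$---possibly augmented by borrowing the induction's pebble on $v_1$, which is safe when $v_k$ already supplies the required endpoint---is always at least $2^{k-2}$, so the lemma routes it to whichever of $v_1$ or $v_{k-1}$ is still missing, independently of where those surplus pebbles sit and without touching the pebbles the induction already deposited. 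With this lemma in hand there is no ``degenerate regime'' to treat separately and no delicate choice of which pebbles to reserve; your outline becomes a complete proof once you replace the collapse by pigeonhole and the hand-waved routing by a call to $\pi(P_{n-1})=2^{n-2}$.
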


\begin{proof}
    For $P_3$, assume that we have three pebbles. If both $v_1$ and $v_2$ have pebble(s) or both $v_2$ and $v_3$ have pebble(s), then the proof is complete. So, by symmetry, we need to handle the following three cases.
    \begin{itemize}
        \item $v_1$ has two pebbles and $v_3$ has one pebble.
        \item $v_1$ has three pebbles.
        \item $v_2$ has three pebbles.
    \end{itemize}
    In the first case, we can make a pebbling move by removing two pebbles from $v_1$ and adding a pebble to $v_2$. In the second case, we can also remove two pebbles from $v_1$ and add a pebble to $v_2$. In the third case, we can remove two pebbles from $v_2$ and add a pebble to $v_1$. So the base case $n=3$ is proved.

    Assume that the conclusion holds true for $n=k-1$. Assume $n=k$ and we have $2^{k-1}-1$ pebbles. By the pigeonhole principle, we know that there are at least $2^{k-2}$ pebbles on $\{v_1,\ v_2,\ ...,\ v_{k-1}\}$ or $\{v_2,\ v_3,\ ...,\ v_k\}$. By symmetry, we assume that there are at least $2^{k-2}$ pebbles on $\{v_1,\ v_2,\ ...,\ v_{k-1}\}$. By the induction hypothesis, we can use at most $2^{k-2}-1$ pebbles to make some pebbling moves to get a pebble on each of $v_2,\ v_3,\ ...,\ v_{k-2}$ and also get a pebble on $v_1$ or $v_{k-1}$.
    
    We have used at most $2^{k-2}-1$ pebbles, so there are at least $2^{k-2}$ pebbles not used yet. We have two cases.

    \textbf{Case 1.} There is a pebble on $v_1$. In this case, we need to get a pebble on $v_{k-1}$.

    \textbf{Subcase 1.1.} There are at least two pebbles on $v_k$. In this subcase, we just need to remove two pebbles from $v_k$ and add a pebble to $v_{k-1}$.

    \textbf{Subcase 1.2.} There is exactly one pebble on $v_k$. In this subcase, we still have at least $2^{k-2}-1$ pebbles not used yet, and all of them are on $\{v_1,\ v_2,\ ...,\ v_{k-1}\}$. We also take an extra pebble from $v_1$, so there are $2^{k-2}$ pebbles to use. We have the following lemma (see \cite{Hu}) about the pebbling number of a path. 
    
    \begin{lemma}\label{lemma}
        $\pi(P_m)=2^{m-1}$.
    \end{lemma}
    Now, $v_1,\ v_2,\ ...,\ v_{k-1}$ form a $P_{k-1}$, so we can use the $2^{k-2}$ pebbles to make some pebbling moves and get a pebble on $v_{k-1}$. Note that, in order to have $2^{k-2}$ pebbles to use, we took a pebble from $v_1$, so if $v_1$ only had one pebble on it before these pebbling moves, then after the pebbling moves, it is possible that there is no pebble on $v_1$. But in this subcase, we have assumed that there is a pebble on $v_k$. So now we have a pebble on each of $v_2,\ v_3,\ ...,\ v_k$, as desired.

    \textbf{Subcase 1.3.} There are no pebbles on $v_k$. In this subcase, we have at least $2^{k-2}$ unused pebbles, all of which are on $\{v_1,\ v_2,\ ...,\ v_{k-1}\}$. So by Lemma \ref{lemma}, we can make some pebbling moves to get a pebble on $v_{k-1}$.

    \textbf{Case 2.} There is no pebble on $v_1$, but there is a pebble on $v_{k-1}$. In this case, we need to get a pebble on $v_1$ or $v_k$. If there is already a pebble on $v_k$, then the proof is complete. If $v_k$ does not have a pebble, then there are at least $2^{k-2}$ unused pebbles, all of which are on $\{v_1,\ v_2,\ ...,\ v_{k-1}\}$. By Lemma \ref{lemma}, we can make some pebbling moves to get a pebble on $v_1$.
\end{proof}

\section{Stars and books}
For $n\ge 2$, the \emph{star} on $n+1$ vertices, denoted by $S_n$, is constructed by connecting $n$ independent vertices to a universal vertex. This universal vertex will be called the \emph{center} of the star. It is easy to see that a vertex set in a star is a strong hub set if and only if it contains the center.

In $S_n$, if we only have $n$ pebbles, then we can put one pebble on each vertex which is not the center. We cannot get a pebble on the center by making pebbling moves, so $h_s^*(S_n)\ge n+1$. If we have at least $n+1$ pebbles and none of them is on the center, then by the pigeonhole principle, there must be a vertex with two pebbles on it. We can remove these two pebbles from this vertex and place one pebble on the center. So $n+1$ pebbles are enough.

\begin{proposition}\label{star}
    For any $n\ge 2$, we have
    \begin{align*}
        h_s^*(S_n)=n+1.
    \end{align*}
\end{proposition}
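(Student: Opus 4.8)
The plan is to split the statement into the two inequalities $h_s^*(S_n)\ge n+1$ and $h_s^*(S_n)\le n+1$, using the characterization already noted above that a vertex set in $S_n$ is a strong hub set if and only if it contains the center. The first thing I would record is that the singleton consisting of the center alone is a strong hub set: for any two leaves, the path through the center has its only internal vertex at the center, and the center is adjacent to every leaf, so those pairs require no internal vertices. Consequently the whole problem collapses to the single question of whether a pebble can be placed on the center, because once the center carries a pebble we may take the covering strong hub set to be exactly $\{\text{center}\}$.

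For the lower bound I would exhibit one bad configuration of $n$ pebbles. Placing a single pebble on each of the $n$ leaves yields a configuration in which no vertex holds two pebbles, so no pebbling move is available at all; the configuration is frozen and the center never receives a pebble. Since covering any strong hub set forces a pebble on the center, this witnesses that $n$ pebbles are insufficient, giving $h_s^*(S_n)\ge n+1$.

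For the upper bound I would start from an arbitrary configuration of $n+1$ pebbles and show that a pebble can always be produced on the center. If the center already holds a pebble we are done. Otherwise all $n+1$ pebbles lie on the $n$ leaves, so by the pigeonhole principle some leaf carries at least two pebbles; one pebbling move from that leaf to the center completes the cover. Combining the two bounds yields $h_s^*(S_n)=n+1$.

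I do not anticipate a genuine obstacle here, since in a star every relevant path has length at most two and the argument reduces to a pigeonhole count. The only points needing care are confirming that $\{\text{center}\}$ really is a strong hub set and that the configuration with one pebble on each leaf admits no pebbling move whatsoever; both are immediate from the definitions.
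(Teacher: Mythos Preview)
Your proof is correct and follows essentially the same approach as the paper: the same bad configuration (one pebble on each leaf) for the lower bound, and the same pigeonhole argument for the upper bound, both reducing to the observation that any strong hub set must contain the center. The only cosmetic difference is that you spell out explicitly why the singleton $\{\text{center}\}$ is a strong hub set and why the leaf configuration is frozen, which the paper leaves implicit.
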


Then, for $n\ge 2$, the \emph{book} on $2n+2$ vertices, denoted by $B_n$, is defined by $S_n\square P_2$, the Cartesian product of the star on $n+1$ vertices and the path on two vertices.

\begin{theorem}
    For any $n\ge 2$, we have
    \begin{align*}
        h_s^*(B_n)=2n+3.
    \end{align*}
\end{theorem}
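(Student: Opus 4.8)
The plan is to first pin down exactly which vertex sets of $B_n$ are strong hub sets, since this is what ``cover a strong hub set'' really means. Write the $a$-copy of the star with center $c_a$ and leaves $\ell^a_1,\dots,\ell^a_n$, the $b$-copy with center $c_b$ and leaves $\ell^b_1,\dots,\ell^b_n$, so that the edges are the star edges $c_a\ell^a_i$, $c_b\ell^b_i$ together with the $P_2$-edges $c_ac_b$ and $\ell^a_i\ell^b_i$. Deleting $c_a$ and $c_b$ leaves only the $n$ disjoint ``page'' edges $\ell^a_i\ell^b_i$, so any path joining two distinct $a$-leaves must pass through a center. Running this observation through the definition, I would show that a set $U$ is a strong hub set if and only if $U$ contains $\{c_a,c_b\}$, or all of the $a$-copy, or all of the $b$-copy; the three \emph{minimal} strong hub sets are thus $\{c_a,c_b\}$, $V_a:=\{c_a,\ell^a_1,\dots,\ell^a_n\}$, and $V_b:=\{c_b,\ell^b_1,\dots,\ell^b_n\}$. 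Because covering a superset forces covering the minimal set inside it, this reduces the theorem to computing the least $t$ so that \emph{every} configuration of $t$ pebbles can be moved to cover at least one of these three sets.

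For the lower bound $h_s^*(B_n)\ge 2n+3$ I would exhibit a single configuration on $2n+2$ pebbles from which none of the three minimal sets is coverable: place $5$ pebbles on $\ell^a_1$, one pebble on each of $\ell^a_3,\dots,\ell^a_n$ and each of $\ell^b_2,\dots,\ell^b_n$, and nothing elsewhere, so that $\ell^a_2$, $\ell^b_1$, $c_a$, $c_b$ are empty; the total is $5+(n-2)+(n-1)=2n+2$. Every lone pebble is immovable, and reactivating one would cost at least $4$ (for an $a$-leaf at distance $2$ from the pile) or $8$ (for a $b$-leaf at distance $3$), which the budget of $5$ cannot afford while also doing anything else. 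Hence only the pile of $5$ is useful, and I would check the three cheapest tasks it faces: covering $\{c_a,c_b\}$ costs $2+4=6$; covering $V_a$ requires filling the empty $\ell^a_2$ (cost $4$) and $c_a$ (cost $2$) while reserving a pebble on $\ell^a_1$, total $7$; covering $V_b$ requires filling $\ell^b_1$ (cost $2$) and $c_b$ (cost $4$), total $6$. All three exceed $5$, so the configuration is bad.

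For the upper bound I would take an arbitrary configuration $D$ with $|D|=2n+3$ and argue by cases, the guiding dichotomy being whether we can deposit a pebble on \emph{both} centers. The favorable case uses the page totals $t_i=D(\ell^a_i)+D(\ell^b_i)$ and the center loads: a single pile of size $\ge 6$, or an already-occupied center together with any independent source, or two disjoint ``doubleton'' sources routed to the two centers, each lets us cover $\{c_a,c_b\}$ and finish. In the complementary case the obstruction to reaching both centers is strong enough to force the pebbles to be few and concentrated; here I would instead complete a copy, observing that once almost all leaves of one copy carry a pebble, only its center must be reached, and the surplus guaranteed by the count $2n+3$ (one more than the extremal bad configuration) always suffices to route that last pebble without emptying a needed leaf. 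Checking that adding a single pebble anywhere to the extremal configuration unlocks exactly one of these moves is the template the general argument must match.

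The hard part will be the bookkeeping in the complementary case: the matching edges $\ell^a_i\ell^b_i$ and the spine edge $c_ac_b$ make ``can a pebble reach $c_a$'' depend on the global distribution rather than on the $a$-copy alone, since a balanced page such as $(2,2)$ can feed both centers while an unbalanced pile of equal mass cannot. Controlling this interdependence — proving that whenever the two centers cannot simultaneously be reached, the pebbles are concentrated enough in one copy to cover it — is the main obstacle, and I expect it to require carefully tracking how many pages are ``paired,'' how many leaves are empty in each copy, and how the $c_ac_b$ edge can be used to shuttle the decisive pebble.
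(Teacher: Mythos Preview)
Your characterization of the minimal strong hub sets and your lower-bound configuration are essentially the paper's (the paper puts the five pebbles on $u_1$, single pebbles on $u_2,\dots,u_{n-1}$ and $v_1,\dots,v_{n-1}$, but your variant with $\ell^b_1$ empty instead of $\ell^b_n$ works just as well). So the structural half of the argument matches.

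Where you diverge is in the organization of the upper bound. Your proposed dichotomy --- ``either we can pebble both centers, or we cannot and must complete a copy'' --- is natural but hard to execute, because ``we cannot pebble both centers'' is a global, implicit condition on the distribution rather than a checkable hypothesis, and your heuristic that in the complementary case the pebbles are ``few and concentrated'' is not accurate. For instance, with $n=4$ the configuration $u_1=5$, $u_2=u_3=u_4=1$, $v_2=v_3=v_4=1$, $a=b=v_1=0$ has $2n+3$ pebbles, does \emph{not} allow covering $\{c_a,c_b\}$, yet is spread over seven vertices; what makes it solvable is precisely that every $a$-leaf already carries a pebble, so one move covers $V_a$. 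Your later remark (``once almost all leaves of one copy carry a pebble\dots'') is closer to the truth, but you have not argued why failure of condition~1 forces this.

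The paper avoids this difficulty by splitting on an explicit numerical invariant: the number of pebbles on the $a$-star versus the $b$-star. If each side has at least $n+1$, Proposition~\ref{star} gives a pebble on each center. The remaining cases are (up to symmetry) exactly $n+3$ versus $n$, and at least $n+4$ on one side; each is then subdivided by the number of pebbles on the relevant center (and, in the $n+3$/$n$ case, by whether the light side is spread as one-per-leaf). Every subcase is a short direct check that one of the three conditions can be met. This is the bookkeeping you anticipate in your last paragraph; the paper's split by side-totals is what makes it finite and routine, and your plan as stated does not yet contain the idea that organizes it.
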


\begin{proof}
We construct $B_n$ in the following way.
\begin{itemize}
    \item Draw two copies of $S_n$: One has center $a$ and $n$ vertices $u_1,\ u_2,\ ...,\ u_n$ connected to $a$; the other has center $b$ and $n$ vertices $v_1,\ v_2,\ ...,\ v_n$ connected to $b$.
    \item Connect $a$ with $b$ and connect $u_i$ with $v_i$ for any $1\le i\le n$.
\end{itemize}

An example of $B_6$ is shown in Figure \ref{B6}.

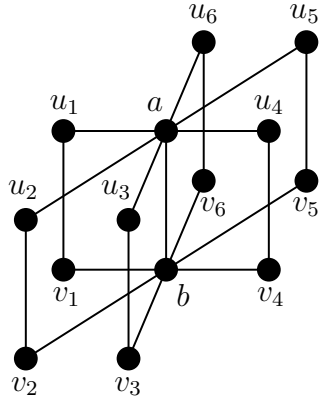
\begin{figure}[H]
    \tikzset{every picture/.style={line width=0.75pt}} 

\begin{tikzpicture}[x=0.75pt,y=0.75pt,yscale=-1,xscale=1]

\draw    (315.15,68.54) -- (277.23,158.26) ;
\draw    (244.39,113.4) -- (347.99,113.4) ;
\draw    (225.43,158.26) -- (366.95,68.54) ;
\draw  [fill={rgb, 255:red, 0; green, 0; blue, 0 }  ,fill opacity=1 ] (290.69,113.4) .. controls (290.69,110.36) and (293.16,107.9) .. (296.19,107.9) .. controls (299.23,107.9) and (301.69,110.36) .. (301.69,113.4) .. controls (301.69,116.44) and (299.23,118.9) .. (296.19,118.9) .. controls (293.16,118.9) and (290.69,116.44) .. (290.69,113.4) -- cycle ;
\draw  [fill={rgb, 255:red, 0; green, 0; blue, 0 }  ,fill opacity=1 ] (238.89,113.4) .. controls (238.89,110.36) and (241.36,107.9) .. (244.39,107.9) .. controls (247.43,107.9) and (249.89,110.36) .. (249.89,113.4) .. controls (249.89,116.44) and (247.43,118.9) .. (244.39,118.9) .. controls (241.36,118.9) and (238.89,116.44) .. (238.89,113.4) -- cycle ;
\draw  [fill={rgb, 255:red, 0; green, 0; blue, 0 }  ,fill opacity=1 ] (219.93,158.26) .. controls (219.93,155.22) and (222.4,152.76) .. (225.43,152.76) .. controls (228.47,152.76) and (230.93,155.22) .. (230.93,158.26) .. controls (230.93,161.3) and (228.47,163.76) .. (225.43,163.76) .. controls (222.4,163.76) and (219.93,161.3) .. (219.93,158.26) -- cycle ;
\draw  [fill={rgb, 255:red, 0; green, 0; blue, 0 }  ,fill opacity=1 ] (271.73,158.26) .. controls (271.73,155.22) and (274.2,152.76) .. (277.23,152.76) .. controls (280.27,152.76) and (282.73,155.22) .. (282.73,158.26) .. controls (282.73,161.3) and (280.27,163.76) .. (277.23,163.76) .. controls (274.2,163.76) and (271.73,161.3) .. (271.73,158.26) -- cycle ;
\draw  [fill={rgb, 255:red, 0; green, 0; blue, 0 }  ,fill opacity=1 ] (342.49,113.4) .. controls (342.49,110.36) and (344.96,107.9) .. (347.99,107.9) .. controls (351.03,107.9) and (353.49,110.36) .. (353.49,113.4) .. controls (353.49,116.44) and (351.03,118.9) .. (347.99,118.9) .. controls (344.96,118.9) and (342.49,116.44) .. (342.49,113.4) -- cycle ;
\draw  [fill={rgb, 255:red, 0; green, 0; blue, 0 }  ,fill opacity=1 ] (361.45,68.54) .. controls (361.45,65.5) and (363.92,63.04) .. (366.95,63.04) .. controls (369.99,63.04) and (372.45,65.5) .. (372.45,68.54) .. controls (372.45,71.58) and (369.99,74.04) .. (366.95,74.04) .. controls (363.92,74.04) and (361.45,71.58) .. (361.45,68.54) -- cycle ;
\draw  [fill={rgb, 255:red, 0; green, 0; blue, 0 }  ,fill opacity=1 ] (309.65,68.54) .. controls (309.65,65.5) and (312.12,63.04) .. (315.15,63.04) .. controls (318.19,63.04) and (320.65,65.5) .. (320.65,68.54) .. controls (320.65,71.58) and (318.19,74.04) .. (315.15,74.04) .. controls (312.12,74.04) and (309.65,71.58) .. (309.65,68.54) -- cycle ;
\draw    (315.15,138.54) -- (277.23,228.26) ;
\draw    (244.39,183.4) -- (347.99,183.4) ;
\draw    (225.43,228.26) -- (366.95,138.54) ;
\draw  [fill={rgb, 255:red, 0; green, 0; blue, 0 }  ,fill opacity=1 ] (290.69,183.4) .. controls (290.69,180.36) and (293.16,177.9) .. (296.19,177.9) .. controls (299.23,177.9) and (301.69,180.36) .. (301.69,183.4) .. controls (301.69,186.44) and (299.23,188.9) .. (296.19,188.9) .. controls (293.16,188.9) and (290.69,186.44) .. (290.69,183.4) -- cycle ;
\draw  [fill={rgb, 255:red, 0; green, 0; blue, 0 }  ,fill opacity=1 ] (238.89,183.4) .. controls (238.89,180.36) and (241.36,177.9) .. (244.39,177.9) .. controls (247.43,177.9) and (249.89,180.36) .. (249.89,183.4) .. controls (249.89,186.44) and (247.43,188.9) .. (244.39,188.9) .. controls (241.36,188.9) and (238.89,186.44) .. (238.89,183.4) -- cycle ;
\draw  [fill={rgb, 255:red, 0; green, 0; blue, 0 }  ,fill opacity=1 ] (219.93,228.26) .. controls (219.93,225.22) and (222.4,222.76) .. (225.43,222.76) .. controls (228.47,222.76) and (230.93,225.22) .. (230.93,228.26) .. controls (230.93,231.3) and (228.47,233.76) .. (225.43,233.76) .. controls (222.4,233.76) and (219.93,231.3) .. (219.93,228.26) -- cycle ;
\draw  [fill={rgb, 255:red, 0; green, 0; blue, 0 }  ,fill opacity=1 ] (271.73,228.26) .. controls (271.73,225.22) and (274.2,222.76) .. (277.23,222.76) .. controls (280.27,222.76) and (282.73,225.22) .. (282.73,228.26) .. controls (282.73,231.3) and (280.27,233.76) .. (277.23,233.76) .. controls (274.2,233.76) and (271.73,231.3) .. (271.73,228.26) -- cycle ;
\draw  [fill={rgb, 255:red, 0; green, 0; blue, 0 }  ,fill opacity=1 ] (342.49,183.4) .. controls (342.49,180.36) and (344.96,177.9) .. (347.99,177.9) .. controls (351.03,177.9) and (353.49,180.36) .. (353.49,183.4) .. controls (353.49,186.44) and (351.03,188.9) .. (347.99,188.9) .. controls (344.96,188.9) and (342.49,186.44) .. (342.49,183.4) -- cycle ;
\draw  [fill={rgb, 255:red, 0; green, 0; blue, 0 }  ,fill opacity=1 ] (361.45,138.54) .. controls (361.45,135.5) and (363.92,133.04) .. (366.95,133.04) .. controls (369.99,133.04) and (372.45,135.5) .. (372.45,138.54) .. controls (372.45,141.58) and (369.99,144.04) .. (366.95,144.04) .. controls (363.92,144.04) and (361.45,141.58) .. (361.45,138.54) -- cycle ;
\draw  [fill={rgb, 255:red, 0; green, 0; blue, 0 }  ,fill opacity=1 ] (309.65,138.54) .. controls (309.65,135.5) and (312.12,133.04) .. (315.15,133.04) .. controls (318.19,133.04) and (320.65,135.5) .. (320.65,138.54) .. controls (320.65,141.58) and (318.19,144.04) .. (315.15,144.04) .. controls (312.12,144.04) and (309.65,141.58) .. (309.65,138.54) -- cycle ;
\draw    (225.43,158.26) -- (225.43,228.26) ;
\draw    (244.39,113.4) -- (244.39,183.4) ;
\draw    (277.23,158.26) -- (277.23,228.26) ;
\draw    (296.19,113.4) -- (296.19,183.4) ;
\draw    (347.99,113.4) -- (347.99,183.4) ;
\draw    (315.15,68.54) -- (315.15,138.54) ;
\draw    (366.95,68.54) -- (366.95,138.54) ;

\draw (285,95) node [anchor=north west][inner sep=0.75pt]   [align=left] {$\displaystyle a$};
\draw (236,94) node [anchor=north west][inner sep=0.75pt]   [align=left] {$\displaystyle u_{1}$};
\draw (215,137) node [anchor=north west][inner sep=0.75pt]   [align=left] {$\displaystyle u_{2}$};
\draw (262,137) node [anchor=north west][inner sep=0.75pt]   [align=left] {$\displaystyle u_{3}$};
\draw (306,48) node [anchor=north west][inner sep=0.75pt]   [align=left] {$\displaystyle u_{6}$};
\draw (357,48) node [anchor=north west][inner sep=0.75pt]   [align=left] {$\displaystyle u_{5}$};
\draw (339,94) node [anchor=north west][inner sep=0.75pt]   [align=left] {$\displaystyle u_{4}$};
\draw (237.6,191) node [anchor=north west][inner sep=0.75pt]   [align=left] {$\displaystyle v_{1}$};
\draw (216.6,236) node [anchor=north west][inner sep=0.75pt]   [align=left] {$\displaystyle v_{2}$};
\draw (268.6,236) node [anchor=north west][inner sep=0.75pt]   [align=left] {$\displaystyle v_{3}$};
\draw (311.65,145.54) node [anchor=north west][inner sep=0.75pt]   [align=left] {$\displaystyle v_{6}$};
\draw (358.45,145.54) node [anchor=north west][inner sep=0.75pt]   [align=left] {$\displaystyle v_{5}$};
\draw (340.6,191) node [anchor=north west][inner sep=0.75pt]   [align=left] {$\displaystyle v_{4}$};
\draw (300,189) node [anchor=north west][inner sep=0.75pt]   [align=left] {$\displaystyle b$};

\end{tikzpicture}
\caption{An example of $B_6$.}
\label{B6}
\end{figure}

First we prove that a vertex set $S\subseteq V(B_n)$ is a strong hub set if and only if it contains one of the following three sets, which are shown by the yellow vertices in Figure \ref{strong}.
\begin{itemize}
    \item $\{a,\ b\}$;
    \item $\{a,\ u_1,\ u_2,\ ...,\ u_n\}$;
    \item $\{b,\ v_1,\ v_2,\ ...,\ v_n\}$.
\end{itemize}

\begin{figure}[H]
    \input strong.tex
\end{figure}

For the sufficiency, it is easy to see that if $S$ contains one of $\{a,\ b\}$, $\{a,\ u_1,\ u_2,\ ...,\ u_n\}$, and $\{b,\ v_1,\ v_2,\ ...,\ v_n\}$, then $S$ is a strong hub set. For the necessity, assume that $S$ does not contain any one of the three vertex set. Then, one of the following three statements must hold true.
\begin{itemize}
    \item $S\cap \{a,\ b\}=\emptyset$.
    \item $a\in S$, $b\notin S$, and $u_i\notin S$ for some $1\le i\le n$.
    \item $b\in S$, $a\notin S$, and $v_j\notin S$ for some $1\le j\le n$.
\end{itemize}

If $S\cap \{a,\ b\}=\emptyset$, then for $u_1$ and $v_2$, we cannot find a path between them whose internal vertices are all in $S$. If $a\in S$, $b\notin S$, and $u_i\notin S$ for some $1\le i\le n$, then for $a$ and $v_i$, we cannot find a path between them whose internal vertices are all in $S$. If $b\in S$, $a\notin S$, and $v_j\notin S$ for some $1\le j\le n$, then for $b$ and $u_j$, we cannot find a path between them whose internal vertices are all in $S$. Thus, by definition, $S$ is not a strong hub set, and the necessity is also proved.

Henceforth, if after some pebbling moves, there is at least one pebble on each vertex in $\{a,\ b\}$, we will say that "\textbf{condition 1} is fulfilled"; if after some pebbling moves, there is at least one pebble on each vertex in $\{a,\ u_1,\ u_2,\ ...,\ u_n\}$, we will say that "\textbf{condition 2} is fulfilled"; and if after some pebbling moves, there is at least one pebble on each vertex in $\{b,\ v_1,\ v_2,\ ...,\ v_n\}$, we will say that "\textbf{condition 3} is fulfilled".

Now, if we only have $2n+2$ pebbles, then we can put five pebbles on $u_1$, put one pebble on each of $u_2,\ u_3,\ ...,\ u_{n-1}$, and put one pebble on each of $v_1,\ v_2,\ ...,\ v_{n-1}$. It is easy to check that we cannot make pebbling moves so that one of the three conditions is fulfilled. So $h_s^*(B_n)\ge 2n+3$.

Then we show that $2n+3$ pebbles are enough. By symmetry, we need to handle three cases.

\textbf{Case 1.} There are at least $n+1$ pebbles on $\{a,\ u_1,\ u_2,\ ...,\ u_n\}$, and at least $n+1$ pebbles on $\{b,\ v_1,\ v_2,\ ...,\ v_n\}$.

In this case, by Proposition \ref{star}, we know that we can make pebbling moves to get a pebble on $a$ and a pebble on $b$, so condition 1 is fulfilled.

\textbf{Case 2.} There are exactly $n+3$ pebbles on $\{a,\ u_1,\ u_2,\ ...,\ u_n\}$, and exactly $n$ pebbles on $\{b,\ v_1,\ v_2,\ ...,\ v_n\}$. Depending on the pebble distribution in $\{b,\ v_1,\ v_2,\ ...,\ v_n\}$, we have three subcases.

\textbf{Subcase 2.1.} There is a pebble on $b$.

In this subcase, if there is also a pebble on $a$, then condition 1 is fulfilled; otherwise, there must be a $u_i$ with at least two pebbles on it, so we can remove two pebbles from $u_i$ and add a pebble to $a$, and then condition 1 is fulfilled.

\textbf{Subcase 2.2.} There are no pebbles on $b$, but there is a $v_i$ with at least two pebbles on it.

In this subcase, we just need to remove two pebbles from $v_i$ and add a pebble to $b$, and then it becomes Subcase 2.1.

\textbf{Subcase 2.2.} There are no pebbles on $b$, and there is exactly one pebble on each $v_i$.

If there are at least two pebbles on $a$, then we can remove two pebbles from $a$ and add a pebble to $b$, and condition 3 is fulfilled.

If there is exactly one pebble on $a$, then there is a $u_i$ with at least two pebbles on it, so we can remove two pebbles from $u_i$ and add a pebble to $a$. Now there are two pebbles on $a$, so we can remove these two pebbles from $a$ and add a pebble to $b$, and condition 3 is fulfilled.

If there are no pebbles on $a$, then either there is a $u_i$ with at least four pebbles on it, or there are $u_j$ and $u_k$ with at least two pebbles on each of them. In either case we can remove four pebbles (either from $u_i$, or from $u_j$ and $u_k$) and add two pebbles to $a$. Then we can remove these two pebbles from $a$ and add a pebble to $b$, and condition 3 is fulfilled.

\textbf{Case 3.} There are at least $n+4$ pebbles on $\{a,\ u_1,\ u_2,\ ...,\ u_n\}$. Depending on how many pebbles there are on $a$, we have four subcases.

\textbf{Subcase 3.1.} There are at least three pebbles on $a$.

In this subcase, we can remove two pebbles from $a$ and add a pebble to $b$, and then condition 1 is fulfilled.

\textbf{Subcase 3.2.} There are exactly two pebbles on $a$.

In this subcase, there must be a $u_i$ with at least two pebbles on it, so we can remove two pebbles from $u_i$ and add a pebble to $a$. Now there are three pebbles on $a$, and it becomes Subcase 3.1.

\textbf{Subcase 3.3.} There is exactly one pebble on $a$.

In this subcase, there are at least $n+3$ pebbles on $\{u_1,\ u_2,\ ...,\ u_n\}$, so either there is a $u_i$ with at least four pebbles on it, or there are $u_j$ and $u_k$ with at least two pebbles on each of them. In either case we can remove four pebbles (either from $u_i$, or from $u_j$ and $u_k$) and add two pebbles to $a$. Now there are three pebbles on $a$, and it becomes Subcase 3.1.

\textbf{Subcase 3.4.} There are no pebbles on $a$.

If there is at least one pebble on each vertex in $\{u_1,\ u_2,\ ...,\ u_n\}$, then we have two possible cases.
\begin{itemize}
    \item There is a $u_i$ with at least three pebbles on it. We can remove two pebbles from $u_i$ and add one pebble to $a$. Now condition 2 is fulfilled.
    \item There are $u_i,\ u_j,\ u_k,\ u_\ell$ with exactly two pebbles on each of them. We can remove these eight pebbles and add four pebbles to $a$. Then we remove two pebbles from $a$ and add one pebble to $b$. Now condition 1 is fulfilled.
\end{itemize}

If there is a vertex in $\{u_1,\ u_2,\ ...,\ u_n\}$, say $u_1$, with no pebbles on it, then there are $n+4$ pebbles on $\{u_2,\ u_3,\ ...,\ u_n\}$, and we have four possible cases.
\begin{itemize}
    \item There is a $u_i\in \{u_2,\ u_3,\ ...,\ u_n\}$ with at least six pebbles on it. We can remove these six pebbles and add three pebbles to $a$. Then we remove two pebbles from $a$ and add a pebble to $b$. Now condition 1 is fulfilled.
    \item There are $u_i,\ u_j\in \{u_2,\ u_3,\ ...,\ u_n\}$ with exactly two pebbles on $u_i$ and exactly five pebbles on $u_j$. We remove two pebbles from $u_i$ and add one pebble to $a$. Then we remove four pebbles from $u_j$ and add two pebbles to $a$. Now there are three pebbles on $a$, and we can remove two pebbles from $a$ and add a pebble to $b$ to fulfill condition 1.
    \item There are $u_i,\ u_j\in \{u_2,\ u_3,\ ...,\ u_n\}$ with exactly three pebbles on $u_i$ and exactly four pebbles on $u_j$. This is the same as the previous case.
    \item There are $u_i,\ u_j,\ u_k\in \{u_2,\ u_3,\ ...,\ u_n\}$ with at least two pebbles on each of them. We can remove these six pebbles and add three pebbles to $a$. Then we can remove two pebbles from $a$ and add a pebble to $b$ to fulfill condition 1.
\end{itemize}
\end{proof}

\section{An open problem}
We consider the strong hub cover pebbling numbers of cycles. In a cycle $C_n$, we can pick a vertex, label it $v_1$, and counterclockwise label the other $n-1$ vertices $v_2,\ v_3,\ ...,\ v_n$. We observe that, up to isomorphism, a vertex set in $C_n$ is a strong hub set if and only if it contains $v_1,\ v_2,\ ...,\ v_{n-2}$. Here we make a conjecture.
\begin{conjecture}
    \[
    h_s^*(C_n)=\begin{cases}
        2^k+2^{k-1}-3, & if\ n=2k\ is\ even,\\
        2^{k+1}-3, & if\ n=2k+1\ is\ odd.
    \end{cases}
    \]
\end{conjecture}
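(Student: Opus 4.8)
The plan is to first reformulate the target. By the observation stated just before the conjecture, the minimal strong hub sets of $C_n$ are exactly the $n$ sets $A_i := V(C_n)\setminus\{v_i,v_{i+1}\}$ consisting of $n-2$ consecutive vertices (indices mod $n$), and a strong hub set can be covered if and only if some $A_i$ can be covered. Since during the pebbling moves we may freely route pebbles through the two excluded vertices $v_i,v_{i+1}$, covering a fixed $A_i$ is an ordinary cover-pebbling task on the whole cycle with demand $\mathbf{1}_{A_i}$. Thus $h_s^*(C_n)$ is the least $t$ such that \emph{every} configuration of size $t$ can cover at least one $A_i$, and its value is one more than the largest size of a configuration that fails all $n$ of them simultaneously. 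Since $C_n$ is vertex-transitive, it is convenient to record, for a vertex $v$ and a hub set $A$, the single-source cost $\mathrm{cost}_v(A):=\sum_{u\in A}2^{d(u,v)}$.

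For the lower bound I would place all $t$ pebbles on $v_1$. A stack of $s$ pebbles at a vertex $v$ can cover $A_i$ exactly when $s\ge \mathrm{cost}_v(A_i)$ (one pebble is routed to each demand vertex along a shortest path, and these budgets are disjoint), so such a stack covers some hub set if and only if $s\ge \min_i \mathrm{cost}_{v_1}(A_i)$. Because the total $\sum_u 2^{d(u,v_1)}$ is fixed, this minimum is achieved by deleting the adjacent pair maximizing $2^{d(v_i,v_1)}+2^{d(v_{i+1},v_1)}$, namely the two vertices farthest from $v_1$: in the odd case $n=2k+1$ these are the distance-$k$ vertices $v_{k+1},v_{k+2}$, and in the even case $n=2k$ one takes the unique antipode $v_{k+1}$ together with a distance-$(k-1)$ neighbour. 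Summing the remaining powers of two gives
\[
\min_i \mathrm{cost}_{v_1}(A_i)=\begin{cases}2^{k+1}-3,& n=2k+1,\\[2pt] 2^{k}+2^{k-1}-3,& n=2k.\end{cases}
\]
Hence a stack of one fewer pebble is stuck, which proves the matching lower bound.

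The upper bound I would reduce to the single assertion that, among all configurations failing to cover every $A_i$, a single stack is extremal. Granting this, vertex-transitivity yields $h_s^*(C_n)=\max_v\min_i \mathrm{cost}_v(A_i)$, which equals the stack value just computed. To prove the assertion I would adapt the weight/stacking method behind the Cover Pebbling Theorem: starting from an arbitrary stuck configuration, consolidate pebbles toward one vertex through an exchange/smoothing step engineered so that it never creates coverage of any $A_i$, and show this process cannot decrease the pebble count, so the extremal stuck configurations are single stacks. A more elementary alternative, in the spirit of the book proof, is to split into the two parity cases, use pigeonhole to locate an edge $v_iv_{i+1}$ carrying few pebbles, delete it, and cover the remaining $(n-2)$-path by feeding pebbles from both sides around the cycle, applying Lemma~\ref{lemma} to each of the two arcs so that the small diameter $k$ of the cycle (rather than the length $n-2$ of the path) governs the cost.

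The main obstacle is precisely the disjunctive nature of the target. The classical Cover Pebbling Theorem controls a single fixed demand, whereas here a configuration is stuck only when it simultaneously defeats all $n$ demands $\mathbf{1}_{A_i}$, and it is not a priori clear that concentration is the worst strategy against this freedom to choose the cheapest exclusion. The delicate point is to rule out a cleverly spread-out stuck configuration of larger total size—for instance two moderate stacks placed near antipodal positions—and I expect the parity split to enter exactly here, since the even cycle has a unique antipode while the odd cycle has an adjacent antipodal pair, which is what produces the two different closed forms.
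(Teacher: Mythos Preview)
The paper does \emph{not} prove this statement: it is stated as a conjecture in Section~4, and the paper supplies only the lower bound, via exactly the construction you use --- stacking all pebbles on a single vertex $v_1$ and checking that one fewer pebble is insufficient. Your lower-bound argument is therefore the same as the paper's, though you make the computation of $\min_i \mathrm{cost}_{v_1}(A_i)$ more explicit than the paper does.

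Everything you write about the upper bound goes beyond the paper, which says nothing about it. Your reduction of $h_s^*(C_n)$ to ``one more than the largest stuck configuration'' is correct, and your two suggested attacks (a stacking/consolidation argument in the spirit of the Cover Pebbling Theorem, or a parity-split pigeonhole argument that cuts the cycle at a light edge) are both plausible starting points. But, as you yourself flag, neither is carried out, and the key difficulty you isolate is genuine: the Cover Pebbling (Stacking) Theorem of Sj\"ostrand/Vuong--Wyckoff handles a \emph{single} demand vector, and it is not clear that its consolidation step preserves simultaneous failure against all $n$ demands $\mathbf{1}_{A_i}$. In particular, the exchange you propose (``consolidate pebbles toward one vertex \ldots\ so that it never creates coverage of any $A_i$'') is exactly the step that needs a new idea --- moving a pebble closer to one vertex can make some $A_i$ cheaper even as it makes another more expensive, and you have to rule out that the minimum over $i$ drops below the current pile size. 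Your worry about two moderate near-antipodal stacks is well placed; until that configuration is analyzed, the upper bound remains open, just as the paper leaves it.
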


To see the lower bound: If $n=2k$ is even, and we only have $2^k+2^{k-1}-4$ pebbles, then we can put all of them on $v_1$. Similarly, if $n=2k+1$ is odd, and we only have $2^{k+1}-4$ pebbles, then we can also put all of them on $v_1$. It is easy to verify that in these two initial configurations, we cannot choose a strong hub set $S\subseteq V(C_n)$ and place one pebble on each vertex in $S$ by making pebbling moves.

\end{document}